\newcommand{\e}{\varepsilon}
\newcommand{\R}{\mathbb{R}}
\newcommand{\RN}{{\mathbb{R}^N}}
 \DeclareMathOperator{\dv}{div}
\renewcommand{\le}{\leslant}
\renewcommand{\ge}{\geslant}
\renewcommand{\a }{\alpha }
\renewcommand{\d }{\delta }
\newcommand{\n }{\nabla }
\newcommand{\s }{\sigma }
\renewcommand{\H}{H^1(\RN)}
\def\bbm[#1]{\mbox{\boldmath $#1$}}
\newcommand{\beq }{\begin{equation}}
\newcommand{\eeq }{\end{equation}}
\renewcommand{\le}{\leqslant}
\renewcommand{\ge}{\geqslant}
\newcommand{\dis}{\displaystyle}
\providecommand{\pgfsyspdfmark}[3]{}
\renewcommand{\theequation}{\thesection.\@arabic\c@equation}
\providecommand\@dotsep{5}
\def\listtodoname{List of Todos}
\def\listoftodos{\@starttoc{tdo}\listtodoname}
\newtheorem{theorem}{Theorem}[section]
\newtheorem{remark}[theorem]{Remark}
\newtheorem{definition}[theorem]{Definition}
\title
[Oscillating solutions for prescribed mean curvature equations]
{Oscillating solutions for prescribed mean curvature equations: Euclidean and Lorentz-Minkowski cases}
\author[A. Pomponio]{Alessio Pomponio}
\address[A. Pomponio]{\newline\indent
Dipartimento di Meccanica, Matematica e Management
\newline\indent 
Politecnico di Bari
\newline\indent
Via Orabona 4,  70125  Bari, Italy}
\email{alessio.pomponio@poliba.it}
\thanks{}
\subjclass[2010]{35B05, 35J93}
\date{}
\keywords{Prescribed mean curvature equations, oscillating solutions.}
\begin{document}

\begin{abstract}
This paper deals with the prescribed mean curvature equations
\begin{equation*}
-\dv \left(\frac{\n u }{\sqrt{1\pm|\n u|^2}}\right)=g(u) \qquad \hbox{ in }\RN,
\end{equation*}
both in the  Euclidean case, with the sign ``$+$", and in the Lorentz-Minkowski case, with the sign ``$-$", 
for $N \ge 1$ under the assumption $g'(0)>0$. We show the existence of oscillating solutions, namely with an unbounded sequence of zeros. Moreover these solutions are periodic, if $N=1$, while they are radial symmetric and decay to zero at infinity with their derivatives, if $N\ge 2$. 
\end{abstract}

\maketitle

\section{Introduction}

Starting from the milestones papers \cite{BL,BLP,S}, the literature is plentiful of results  concerning the  following
class of nonlinear equations
\begin{equation}\label{delta}
-\Delta u=g(u) \qquad \hbox{ in }\RN,
\end{equation}
under the assumption $g'(0)<0$. In particular, for a large class of nonlinearities,  the existence of {\em ground state solutions}, namely radially symmetric positive solutions decaying at infinity, has been proved. Non-existence results of ground state solutions are present in \cite{NS1}. The so called {\em zero mass case}, that is when $g'(0)=0$, instead, has been object of study for example in \cite{BL,NS2}. The arguments in both the situations rely on variational techniques or on an ODE approach, such as the shooting method. Of course, we cannot mention all the contributes on this topic here.

Later on, different authors studied the existence and the non-existence of ground state solutions for a larger class of equations, replacing the Laplacian with different differential operators, under the assumption $g'(0)\le 0$. In particular, some results for the prescribed mean curvature equation in the Euclidean case
\begin{equation}\label{eq+}
-\dv \left(\frac{\n u }{\sqrt{1+|\n u|^2}}\right)=g(u) \qquad \hbox{ in }\RN,
\end{equation}  
and for some generalizations can be found, among others, in \cite{ADP,CG,dPG,FLS,FIN,KS,PS,PW}.

More recently, a lot of attention has been paid on the prescribed mean curvature equation in the Lorentz-Minkowski case
\begin{equation}\label{eq-}
-\dv \left(\frac{\n u }{\sqrt{1-|\n u|^2}}\right)=g(u) \qquad \hbox{ in }\RN.
\end{equation}
Existence and non-existence results of ground state solutions and of sign changing solutions, for the cases $g'(0)<0$ and  $g'(0)=0$ are contained in \cite{A,A2,BDD}. We remark that this kind of differential operator appears naturally also in the contest of Born-Infeld electro-magnetic theory, see  \cite{BDP,BI,BInat,FOP}.

 The case $g'(0)>0$, conversely, is completely different, indeed, as well explained in \cite{BL}, a direct consequence of this hypothesis is that radially symmetric $\H$ solutions of \eqref{delta} can not exist, and usual variational methods fail.  Nevertheless this case is very important since it is related to the study of the propagation of lights beams in a photorefractive crystals when a saturation effect is taken into account (see \cite{MMP} for a more precise description about these phenomena). Under this condition we can find, for example, the well known nonlinear Helmholtz equations. Contrary to the other cases, this one has not been studied intensively: some results  for \eqref{delta}, under several types of assumptions, can be found in \cite{E,EW1,EW2,EW3}. We mention in particular the recent paper \cite{MMP}, where the authors prove the existence of oscillating solutions (which are actually periodic for $N=1$) for \eqref{delta} with $N \ge 1$ and assuming that the nonlinearity $g$  is odd, with $g'(0) > 0$ and such that there exists $\a \in  (0, +\infty]$ such that $g$ is positive on $(0, \a)$ and negative on $(\a, +\infty)$.

Up to our knowledge, very little is known for problems \eqref{eq+} and \eqref{eq-} under these kind of assumptions on  $g$ and, in particular, there is no existence result of oscillating solutions. 
Aim of this paper, therefore, is to extend the results of \cite{MMP} to the prescribed mean curvature equations \eqref{eq+} and \eqref{eq-}, namely 
both in the  Euclidean case and in the Lorentz-Minkowski case,
for $N \ge 1$.

In the following, we will refer to a solution of each equation in \eqref{eq+} and \eqref{eq-} as a classical solution. More precisely, in the Euclidean case, $u$ is a solution of \eqref{eq+} if $u \in C^2(\RN)$ and satisfies the equation pointwise in $\RN$; in the Lorentz-Minkowski case, instead, $u$ is a solution of \eqref{eq-} if $u \in  C^2(\RN)$, $|\n u(x)| < 1$ for all $x \in  \RN $, and $u$ satisfies the equation pointwise in $\RN $.
Moreover, we need the following
\begin{definition}
A  solution $u$ of \eqref{eq+} or of \eqref{eq-} is called oscillating if it has an unbounded sequence of zeros. It is called localized when it converges to zero at infinity together with its partial derivatives up to order 2.
\end{definition}

In this paper, in the one-dimensional case, we will assume on the nonlinearity the following hypotheses:
\begin{enumerate}[label=(g\arabic{*}), ref=g\arabic{*}]
\item \label{g1} $g \in  C(\R)$;
\item \label{g2} $g$ is odd;
\item \label{g4} there exists $\a \in  (0, +\infty]$ such that $g$ is positive on $(0, \a)$ and negative on $(\a, +\infty)$.
\end{enumerate}
If $N\ge 2$, we will require in addition that
\begin{enumerate}[label=(g\arabic{*}), ref=g\arabic{*}]
\setcounter{enumi}{3}
\item \label{g3} $g$ is differentiable in $0$ and $g'(0) > 0$.
\end{enumerate}
Moreover, in the following we will denote by  $G(t)=\int_0^t g(s)\ ds$.
\medskip

For what concerns the Euclidean case, our main result is the following
\begin{theorem}\label{main+}
Assume \eqref{g1}-\eqref{g4} and, if $N\ge 2$, also \eqref{g3}. Then, for any $|\xi |<\a $ such that  $G(\xi)<1$, there exists an oscillating solution $u_\xi\in C^2(\RN)$ for \eqref{eq+} such that $u_\xi(0)=\xi$, $\|u_\xi\|_{L^\infty(\RN)}=|\xi|$. Moreover $u_\xi$ is periodic if $N=1$, while it is localized if $N\ge 2$.
\end{theorem}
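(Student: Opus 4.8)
The plan is to treat the two cases $N=1$ and $N\ge 2$ separately, in both cases reducing the PDE to an ODE initial value problem by looking for radial solutions. For $N=1$ the equation $-\bigl(u'/\sqrt{1\pm(u')^2}\bigr)'=g(u)$ is autonomous, so I would exploit the conservation of energy. Setting $v=u'/\sqrt{1\pm(u')^2}$, one checks that along solutions the quantity $H(u,v)=\mp\sqrt{1-(\text{resp. }+)\,\dots}$—more precisely, in the Euclidean case the first integral is $\dfrac{1}{\sqrt{1+(u')^2}}+G(u)=\text{const}$, and in the Lorentz-Minkowski case $-\dfrac{1}{\sqrt{1-(u')^2}}+G(u)=\text{const}$ (up to sign; I would fix the normalization so the constant equals the value at $u=\xi$, $u'=0$). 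Given $|\xi|<\a$ with $G(\xi)<1$, the assumption \eqref{g4} (positivity of $g$ on $(0,\a)$, oddness) makes $G$ even with a strict maximum structure near $0$, so the level set through $(\xi,0)$ is a closed curve in the phase plane confined to $|u|\le|\xi|$; the condition $G(\xi)<1$ guarantees that $|u'|$ stays strictly below $1$ in the Lorentz-Minkowski case (and is automatically finite in the Euclidean case), so the operator stays nondegenerate. Hence the orbit is periodic, giving a periodic classical solution with $u(0)=\xi$, $u'(0)=0$, $\|u\|_\infty=|\xi|$, and by oddness of $g$ this solution is odd about its critical points and therefore changes sign, producing the unbounded sequence of zeros. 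I would need a short argument (standard ODE theory plus the a priori bound $|u'|<1$) that the solution is global in $t$ and $C^2$.

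For $N\ge 2$ I would look for radial solutions $u(x)=u(r)$, $r=|x|$, solving
\[
-\Bigl(\frac{u'}{\sqrt{1\pm(u')^2}}\Bigr)'-\frac{N-1}{r}\,\frac{u'}{\sqrt{1\pm(u')^2}}=g(u),\qquad u'(0)=0,\ u(0)=\xi.
\]
Here energy is no longer conserved: the term $\frac{N-1}{r}\frac{u'}{\sqrt{\cdots}}$ acts as a damping. Writing $w(r)=\dfrac{u'(r)}{\sqrt{1\pm(u')^2}}$ and $E(r)=\dfrac{1}{\sqrt{1+(u')^2}}+G(u)$ (Euclidean case; analogous with the minus branch in the Lorentz-Minkowski case), one computes $E'(r)=-\frac{N-1}{r}\,u'\,w$, which has a sign: since $u'$ and $w$ always have the same sign, $E$ is monotone. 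I would show local existence by a fixed-point/shooting argument near $r=0$ (the singular $\frac{N-1}{r}$ factor is handled as usual, e.g.\ integrating the equation once: $r^{N-1}w(r)=-\int_0^r s^{N-1}g(u(s))\,ds$), then use the monotonicity of $E$ together with $G(\xi)<1$ to get a global a priori bound $|u'|<1$ (so the Lorentz-Minkowski operator never degenerates) and $\|u\|_\infty\le|\xi|$, giving global existence of a $C^2$ solution. The oscillation for $N\ge 2$ is where hypothesis \eqref{g3}, $g'(0)>0$, enters: near a zero of $u$ the linearization is $-\Delta u\approx g'(0)u$, a Helmholtz-type equation whose radial solutions oscillate; quantitatively I expect to compare with a Bessel-type equation or use a Sturm comparison argument to show that once $u$ is small, $u$ must vanish again within a bounded $r$-increment, ruling out the possibility that $u$ eventually stays of one sign, and also showing the zeros do not accumulate—hence an unbounded sequence of zeros. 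Finally, "localized" (decay of $u,u',u''$ at infinity) should follow from the monotonicity of $E$: $E(r)$ decreases to a limit, which forces $u'(r)\to 0$ and $G(u(r))$ to converge; combined with the oscillation (so $u$ passes through $0$ infinitely often) the only possible limit of $u$ along these passages is $0$, and one upgrades this to $u(r)\to0$ and then $u''(r)\to0$ via the equation.

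The main obstacle I anticipate is the oscillation statement for $N\ge2$: conservation of energy is lost, so I cannot simply read off periodicity from a closed orbit, and I must instead control the solution over \emph{all} of $[0,\infty)$, showing both that $u$ keeps crossing zero and that consecutive zeros stay a bounded distance apart. The natural tool is a Sturm-type comparison against the constant-coefficient (Helmholtz) problem valid in the region where $|u|$ is small, but one has to be careful that $u$ genuinely returns to the small-amplitude regime infinitely often—this is where the interplay between the damping term (pushing the energy, hence the amplitude, down) and hypothesis \eqref{g4} (which prevents the energy from leaking past the level $u=\pm\xi$) is essential. A secondary technical point is the careful handling of the singularity at $r=0$ and the global-in-$r$ continuation for the Lorentz-Minkowski operator, where one must keep $|u'|$ bounded away from $1$; the bound $G(\xi)<1$ is exactly what makes the a priori estimate close.
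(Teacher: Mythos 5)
Your skeleton (radial reduction; for $N=1$ the first integral and phase-plane symmetry; for $N\ge 2$ a monotone energy plus an oscillation argument plus decay) is the same as the paper's, but two of the steps you describe contain genuine gaps. First, you have the role of the hypothesis $G(\xi)<1$ backwards: you write that the gradient bound is needed ``in the Lorentz-Minkowski case'' and is ``automatically finite in the Euclidean case''. In fact it is the opposite. In the Lorentz-Minkowski problem the identity $H_-(u')=G(\xi)-G(u)$ with $H_-(t)=\tfrac{1}{\sqrt{1-t^2}}-1$ unbounded gives $|u'|\le 1-\e$ for free, with no restriction on $G(\xi)$; in the Euclidean problem $H_+(t)=1-\tfrac{1}{\sqrt{1+t^2}}<1$, so if $G(\xi)\ge 1$ the identity $H_+(u')=G(\xi)-G(u)$ forces $|u'|\to+\infty$ at a finite $r$ before $u$ reaches zero: for $N=1$ there is then no global classical solution (only a bounded-variation one), which is exactly why the theorem assumes $G(\xi)<1$. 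Under your reading, the hypothesis would be unused in the Euclidean $N=1$ case, and the closed-orbit picture you invoke would be asserted in a situation where the orbit is in fact not closed. The same confusion reappears for $N\ge 2$, where the bound you need from the energy identity \eqref{H+} is $|u'|\le C$ (uniform ellipticity of the Euclidean operator), not $|u'|<1$.

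Second, the localization step for $N\ge 2$ is essentially asserted rather than proved. You claim that since the energy $E$ (the paper's $Z_+$ in \eqref{z+}) decreases to a limit, ``this forces $u'(r)\to 0$'' and hence $u\to 0$. That implication is false as stated: the dissipation is $Z_+'=-\tfrac{N-1}{r}\tfrac{(u')^2}{\sqrt{1+(u')^2}}$, and because of the $1/r$ weight the total dissipation can a priori be finite, so $Z_+$ may converge while the solution approaches a nontrivial periodic profile of positive amplitude (along which $u'$ and $G(u)$ keep oscillating, with $Z_+$ tending to the constant energy of that orbit). Ruling this out is the heart of the matter and is where the paper uses the Gui--Zhou/Mandel--Montefusco--Pellacci argument: if the maxima $u(r_{4j})$ tended to $z>0$, the translates $u(\cdot+r_{4j})$ converge in $C^1_{loc}$ to the one-dimensional periodic solution with initial datum $z$ (here Theorem \ref{n=1+}\,(iv) and hence $G(z)<1$ is needed), so $|u'|\ge\d$ on intervals of fixed length around each $r_{4j}+\tau$, and summing the dissipation over these intervals produces a divergent harmonic-type series, forcing $Z_+\to-\infty$ and contradicting $Z_+\ge 0$. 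Without this quantitative step your sketch does not yield decay of the amplitudes, only (at best) that $u$ has infinitely many zeros. Your Sturm-comparison idea for producing the zeros themselves is workable and close in spirit to the paper's substitution $v=r^{(N-1)\bar c/2}u$ with $v''+\s_2^+v\le 0$, but note that even there one must first secure a uniform gradient bound on a putative positive tail (the paper gets $u'\to 0$ there from the monotonicity of $Z_+$) before the quasilinear equation can be compared with a Helmholtz-type linear one.
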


In the Lorentz-Minkowski case, instead, our main result is the following
\begin{theorem}\label{main-}
Assume \eqref{g1}-\eqref{g4} and, if $N\ge 2$, also \eqref{g3}. Then, for any $|\xi |<\a $, there exists an oscillating solution $u_\xi\in C^2(\RN)$ for \eqref{eq-} such that $u_\xi(0)=\xi$, $\|u_\xi\|_{L^\infty(\RN)}=|\xi|$. Moreover $u_\xi$ is periodic if $N=1$, while it is localized if $N\ge 2$.
\end{theorem}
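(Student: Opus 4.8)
The plan is to reproduce in the Lorentz--Minkowski setting the ODE strategy behind Theorem~\ref{main+}. We look for radially symmetric solutions $u=u(|x|)=u(r)$, so that \eqref{eq-} becomes
\[
-\Bigl(r^{N-1}\,\frac{u'}{\sqrt{1-(u')^{2}}}\Bigr)'=r^{N-1}g(u),\qquad u(0)=\xi,\quad u'(0)=0,
\]
an autonomous second-order ODE on $\R$ when $N=1$. First I would record the energy identity obtained by multiplying the radial equation by $u'$: the function
\[
\mathcal{E}(r):=\frac{1}{\sqrt{1-(u'(r))^{2}}}+G(u(r))
\]
satisfies $\mathcal{E}'(r)=-\frac{N-1}{r}\,\frac{(u')^{2}}{\sqrt{1-(u')^{2}}}\le 0$, so it is non-increasing (constant, equal to $1+G(\xi)$, when $N=1$). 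Since $g$ is odd and positive on $(0,\alpha)$, the primitive $G$ is even, nonnegative and strictly increasing on $[0,\alpha)$; hence $\mathcal{E}\ge 1$ and, from $\mathcal{E}(r)\le\mathcal{E}(0)=1+G(\xi)$, one reads off the a priori bounds $\|u\|_{L^{\infty}}\le|\xi|<\alpha$ and $(u')^{2}\le 1-(1+G(\xi))^{-2}<1$. This explains why, contrary to Theorem~\ref{main+}, no restriction of the form $G(\xi)<1$ is needed: the Lorentz--Minkowski constraint $|u'|<1$ holds automatically along the solution. Together with a standard fixed-point argument near $r=0$ (to handle the singular coefficient and the mere continuity of $g$), these bounds yield a classical solution $u_{\xi}\in C^{2}$ on all of $[0,+\infty)$ (resp. of $\R$), with $u_{\xi}(0)=\xi$ and $\|u_{\xi}\|_{L^{\infty}}=|\xi|$, for every $|\xi|<\alpha$.

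When $N=1$ the energy is conserved, which gives $(u')^{2}=1-\bigl(1+G(\xi)-G(u)\bigr)^{-2}=:F(u)^{2}$, with $F>0$ on $(-|\xi|,|\xi|)$ and $F$ vanishing like a square root at $\pm|\xi|$ (since $g(\pm|\xi|)\neq 0$). Solving by quadrature, the branch starting at $u(0)=\xi$ is recovered by inverting $x=\int_{u}^{|\xi|}F(t)^{-1}\,dt$; the turning-point integral $\int^{|\xi|}F(t)^{-1}\,dt$ converges, so $u$ descends from $|\xi|$ to $-|\xi|$ in finite ``time'', and oddness of $g$ makes the next half-oscillation symmetric. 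Gluing these monotone arcs and extending periodically produces a $C^{2}$ periodic solution oscillating between $|\xi|$ and $-|\xi|$, with a zero in every half-period; this settles $N=1$.

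For $N\ge 2$ I would first prove oscillation by contradiction. If $u$ had only finitely many zeros then, replacing $u$ by $-u$ if needed (using that $g$ is odd), $u>0$ on some $[R_{0},+\infty)$. On such an interval $u$ cannot have an interior local minimum, because at a critical point $r_{*}$ the equation gives $u''(r_{*})=-g(u(r_{*}))<0$; hence $u$ is eventually monotone and $u(r)\to L\in[0,|\xi|]$. From $\mathcal{E}(r)\downarrow\mathcal{E}_{\infty}$ and $G(u)\to G(L)$ one gets $(u')^{2}\to 1-(\mathcal{E}_{\infty}-G(L))^{-2}$; if this limit were positive then $u'$, eventually of one sign, would tend to a nonzero constant and $u$ would be unbounded, impossible; so $u'\to 0$, and then the equation forces $u''\to -g(L)$, whence $g(L)=0$, i.e. $L=0$. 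Thus $u\to 0$ and $u'\to 0$. Now hypothesis \eqref{g3} enters: since $g(t)=g'(0)t+o(t)$ near $0$, the substitution $u=r^{-(N-1)/2}w$ transforms the equation, as $r\to+\infty$, into $w''+\bigl(g'(0)+o(1)\bigr)w=0$, and a Sturm comparison with $z''+\tfrac12 g'(0)z=0$ shows that $w$, hence $u$, changes sign infinitely often — contradicting $u>0$. Therefore $u_{\xi}$ is oscillating.

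Finally, for $N\ge 2$ I would prove that the oscillating solution is localized. Let $r_{k}\uparrow+\infty$ be its zeros; by the no-local-minimum property, on each $[r_{k},r_{k+1}]$ the solution is unimodal, with a single extremum $s_{k}$, and $\mathcal{E}(s_{k})=1+G(M_{k})$ with $M_{k}:=|u(s_{k})|$; monotonicity of $\mathcal{E}$ forces $M_{k}\downarrow A\ge 0$. Suppose $A>0$. A short computation with the equation shows that on $\{|u|>A/2\}$ the quantity $u''$ keeps a definite sign and is bounded away from $0$, so each of the (one per period) excursions there lasts a bounded time, while on $\{|u|\le A/2\}$ the energy bounds $|u'|$ from below; hence the oscillation lengths $r_{k+1}-r_{k}$ stay bounded and $r_{k}=O(k)$. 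Since $u$ travels a distance $\ge 2A$ on each $[r_{k},r_{k+1}]$ with $|u'|\le 1$, Cauchy--Schwarz gives $\int_{r_{k}}^{r_{k+1}}(u')^{2}\,dr\gtrsim 1$, hence $\int_{r_{k}}^{r_{k+1}}\frac{(u')^{2}}{r}\,dr\gtrsim\frac{1}{r_{k+1}}\gtrsim\frac1k$, which contradicts $\int_{1}^{+\infty}\frac{(u')^{2}}{r}\,dr\le\frac{1}{N-1}\bigl(\mathcal{E}(1)-\mathcal{E}_{\infty}\bigr)<+\infty$. Hence $A=0$, so $|u(r)|\le M_{k}\to 0$, i.e. $u\to 0$; then $\mathcal{E}_{\infty}=1$, so $\frac{1}{\sqrt{1-(u')^{2}}}=\mathcal{E}(r)-G(u)\to 1$, i.e. $u'\to 0$, and the equation then gives $u''\to 0$. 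Thus $u_{\xi}$ is localized. I expect the genuinely delicate point to be this last step — excluding a non-decaying oscillation of positive asymptotic amplitude $A$ — because the energy identity alone does not close the argument for $N\ge 3$, and one must extract from the equation near the turning points the quantitative control on $r_{k+1}-r_{k}$; the remaining ingredients are, modulo the usual care with the singular term at $r=0$ and with the non-Lipschitz nonlinearity $g$, routine phase-plane arguments.
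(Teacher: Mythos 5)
Your proposal is correct in substance and follows the same overall strategy as the paper (radial reduction, the monotone energy $\mathcal{E}=Z_-+1$ coming from \eqref{H-}, the a priori gradient bound as in \eqref{ce}, oscillation by contradiction, then decay), but several key steps are carried out along a genuinely different route. For $N=1$ you construct the periodic solution by quadrature from the conserved energy, while Theorem \ref{n=1-} shows that the Cauchy solution of \eqref{eqr1-} decreases to a zero (a concavity argument) and then uses symmetry about critical points and antisymmetry about zeros; both settle the existence claim. For $N\ge 2$ your oscillation step first proves $u\to 0$, $u'\to 0$ along a hypothetical eventually positive branch and then applies a Sturm comparison to $w=r^{(N-1)/2}u$, whereas the paper works with the differential inequality $v''+\s_2^-(r)v\le 0$ and a convexity/monotonicity contradiction (Step 1 of Theorem \ref{n>1-}); note that your transformed equation is not exactly $w''+(g'(0)+o(1))w=0$, since the factors $1-(u')^2$ leave an extra first-order term of size $o(1/r)\,w'$, which you should absorb (self-adjoint form with a slowly varying weight) or avoid via the paper's inequality. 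The most substantial difference is the localization step: the paper argues by compactness (Ascoli--Arzel\`a convergence of translates to the $1$D periodic solution given by Theorem \ref{n=1-}) and a logarithmic-divergence estimate for $Z_-$, while you bound the oscillation lengths directly from the equation ($u''$ bounded away from zero on $\{|u|>A/2\}$, $|u'|$ bounded below on $\{|u|\le A/2\}$ via $\mathcal{E}\ge 1+G(A)$) and contradict the finiteness of $\int_1^{+\infty}(u')^2r^{-1}\,dr$. Your version is more quantitative and self-contained---in particular it does not invoke uniqueness for the limiting $1$D Cauchy problem, which the paper's Step 3 uses and which is slightly delicate under \eqref{g1} alone---and it yields $\|u\|_{L^\infty}=|\xi|$ at once from the energy bound, which the paper instead deduces from the alternation of extrema; the price is the extra computation controlling $r_{k+1}-r_k$. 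Both arguments ultimately exploit the same mechanism (finiteness of the damping integral versus its divergence for a non-decaying limiting oscillation), so your proposal stands as a valid alternative proof once the minor point about the $o(1/r)w'$ term is fixed.
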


 If one deals with these two cases using variational techniques, the approaches and the functional settings are quite different. In the Lorentz-Minkowski case, for example, a variational approach to the problem can not be performed in the usual functional spaces. In particular, the quantity $1/\sqrt{1-|\n u(x)|^2}$ makes sense when $x\in\RN$ is such that $|\n u(x)|<1$, being this inequality a necessary constraint to be considered in the functional setting (see \cite{BDP}). However, since we are interested in radially symmetric solutions, we will see that our approach, based on ODE techniques, works well in both cases with only
some suitable modifications. 
\\
Our arguments are inspired by \cite{BLP,GZ,MMP} but are more involved and require some additional effort. For instance, if $u$ is a  solution of \eqref{eq+}, we don not know, in general, if its gradient is uniformly bounded, while in the Lorentz-Minkowski case, this uniform bound is obtained for free, since, for any $x\in \RN$, we have to require that $|\n u(x)|< 1$, but we have to be sure that $|\n u|$ remains  uniformly far away from $1$.
For this reason the assumptions of the two theorems are similar but not equal. In the Euclidean case, indeed, it is well known, at least for $N=1$, that problem \eqref{eq+} could have no classic solutions, but only bounded variation solutions, (see \cite{C} and the references therein). Therefore we have to add an additional assumption in order to avoid this particular situation.

The paper is organized as follows. In Section \ref{se:E} we deal with the Euclidean case and Theorem \ref{main+} will be an immediate consequence of Theorems \ref{n=1+} and \ref{n>1+}.  In Section \ref{se:LS}, instead, we treat the Lorentz-Minkowski case and Theorem \ref{main-}  will follow easily from Theorems \ref{n=1-} and \ref{n>1-}. Since the arguments are similar in both cases, in the last section we will skip some details underlying only the necessary differences.
We think that the theorems present in Sections \ref{se:E} and \ref{se:LS} themselves could be of interest.

\section{The Euclidean case}\label{se:E}

This section will be devoted to the Euclidean case.

Let us start with the one-dimensional case, where we can simply consider the following Cauchy problem 
\begin{equation}\label{eqr1+}
\begin{cases}
\dis -\left(\frac{u' }{\sqrt{1+(u')^2}}\right)' =g(u),  &\hbox{in }(0,+\infty),
\\[5mm]
u(0)=\xi ,
\quad
u'(0)=0,
\end{cases}
\end{equation}
where $\xi\in \R$.

The following result is partially already known, see for example \cite{C} and the references therein, but we present it for sake of completeness and because it is a crucial step for the study of the multi-dimensional case.

\begin{theorem}\label{n=1+}
Assume \eqref{g1}-\eqref{g4}. For any $\xi \in \R$ there exists a solution $u_\xi\in C^2([0,R_\xi))$ of the Cauchy problem (\ref{eqr1+}), where $R_\xi\in(0,+\infty]$ is such that $[0,R_\xi)$ is the maximal interval where the function $u_\xi$ is defined. Moreover, we have
\begin{itemize}
\item[(i)] if $|\xi|=\a \in \R$ or $\xi=0$, then $u_\xi\equiv \xi$;
\item[(ii)] if $\a \in \R$ and  $|\xi| > \a $, then $|u_\xi|$ strictly increases on $[0, R_\xi )$ and either $|u_\xi(r)|$ or $|u_\xi'(r)|$ diverges to $+\infty$, as $r\to  R_\xi^-$;
\item[(iii)] if $0 < |\xi| < \a$ and $G(\xi)\ge 1$, then $R_\xi\in \R$ and $|u_\xi'(r)|$ diverges to $+\infty$, as $r\to  R_\xi^-$, and  $\|u_\xi\|_{L^\infty([0,R_\xi))}=|\xi|$;
\item[(iv)] if $0 < |\xi| < \a$ and $G(\xi)<1$, then $R_\xi=+\infty$ and $u_\xi$ is oscillating and periodic with $\|u_\xi\|_{L^\infty(\R_+)}=|\xi|$.
\end{itemize}
\end{theorem}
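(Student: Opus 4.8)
The plan is to treat \eqref{eqr1+} as a planar autonomous ODE system and perform a careful phase-plane analysis, exploiting the odd symmetry of $g$ and a conserved energy. First I would rewrite the equation by setting $v = u'/\sqrt{1+(u')^2}$, so that $|v|<1$ always and $u' = v/\sqrt{1-v^2}$; the equation becomes the first-order system $u' = v/\sqrt{1-v^2}$, $v' = -g(u)$. Local existence and uniqueness of a $C^1$ solution $(u_\xi,v_\xi)$ on a maximal interval $[0,R_\xi)$ follow from the Cauchy--Lipschitz theorem once one notes the right-hand side is continuous (and locally Lipschitz away from the degeneracy, with the $g$-only continuity handled as in the classical Berestycki--Lions--Peletier argument); bootstrapping gives $u_\xi \in C^2$. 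The key structural fact is the conserved quantity: multiplying the equation by $u'$ and integrating yields
\begin{equation*}
\E(r) := 1 - \frac{1}{\sqrt{1+(u_\xi'(r))^2}} + G(u_\xi(r)) = G(\xi),
\end{equation*}
valid on $[0,R_\xi)$. Since the first two terms lie in $[0,1)$, this immediately forces $G(u_\xi(r)) \le G(\xi)$ for all $r$, which will be the engine behind every case.

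Item (i) is the observation that $\xi=0$ or $|\xi|=\a$ are zeros of $g$, so the constant function solves the Cauchy problem and uniqueness (in the $C^2$ sense, which does hold here because we only need uniqueness forward from a point where $u'=0$ and $g$ has a definite sign nearby — or one invokes the standard trick of comparing energies) gives $u_\xi \equiv \xi$. For item (ii), when $|\xi|>\a$, WLOG $\xi>\a$; then $g(u)<0$ near $\xi$, so $v'=-g(u)>0$ and $v$ increases from $0$, hence $u'$ increases from $0$, hence $u$ strictly increases; as long as $u$ stays above $\a$ this persists, so $u$ is increasing and bounded below away from $\a$, and the only way the maximal interval can be finite or the solution leave every compact set is for $|u_\xi|$ or $|u_\xi'|$ to blow up — one argues that if both stayed bounded on a finite $[0,R_\xi)$ the solution would extend, contradiction, and if $R_\xi=+\infty$ with $u$ bounded then $v\to$ some limit and one derives a contradiction with monotonicity unless blow-up of the derivative occurs. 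For item (iii), with $0<|\xi|<\a$ and $G(\xi)\ge 1$: from the energy identity, $1 - 1/\sqrt{1+(u')^2} = G(\xi) - G(u_\xi(r))$; near $r=0$, WLOG $\xi>0$, $g>0$ on $(0,\xi]$ so $v$ decreases, $u'<0$, $u$ decreases, $G(u_\xi)$ decreases, so the right-hand side increases toward $G(\xi)\ge 1$; but $1-1/\sqrt{1+(u')^2}<1$ strictly, so the right-hand side can never actually reach $1$, forcing $G(u_\xi(r))$ to stay bounded below by $G(\xi)-1 \ge 0$ — actually the sharper point is that as $G(u_\xi)\to$ its infimum the quantity $1-1/\sqrt{1+(u')^2}\to G(\xi)-\inf G(u_\xi)$; if this limit is $\ge 1$ then $|u'|\to\infty$, and one shows this happens in finite $r$ (the speed $|u'|$ becoming infinite means $dr = du\,\sqrt{1-v^2}/v$ integrates to something finite), giving $R_\xi<\infty$; meanwhile $u_\xi$ stays in $[-|\xi|,|\xi|]$ because $u$ turns around exactly when... here one must check $u$ cannot exceed $|\xi|$ in absolute value, which again follows from $G(u_\xi)\le G(\xi)$ together with $G$ being increasing on $[0,\a)$ — so $\|u_\xi\|_{L^\infty}=|\xi|$, attained at $r=0$.

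The heart of the matter, and the step I expect to be the main obstacle, is item (iv): $0<|\xi|<\a$, $G(\xi)<1$, showing $R_\xi=+\infty$ and $u_\xi$ periodic. WLOG $\xi>0$. Near $0$, $u$ decreases (as in (iii)); I claim $u$ decreases until it first hits $0$. Suppose it reaches a positive minimum $m_1 \in (0,\xi)$ at some $r_1$; there $u'(r_1)=0$, so by the energy identity $G(m_1)=G(\xi)$, but $G$ is strictly increasing on $[0,\a)$ (since $g>0$ there), contradiction unless $m_1=0$ is not in $(0,\a)$ — wait, the resolution is that $G$ strictly increasing means $G(m_1)<G(\xi)$ for $m_1<\xi$, so $G(m_1)=G(\xi)$ is impossible, hence $u$ cannot turn around before reaching $0$; so $u$ decreases to $0$, reaching it at some finite $r_1$ (finiteness because $v$ stays bounded away from $\pm 1$: indeed $1-1/\sqrt{1+(u')^2} = G(\xi)-G(u)\le G(\xi)<1$ uniformly, so $|u'|\le C$, so $u$ decreases at a rate bounded below as long as $g(u)$ is bounded below, giving $r_1<\infty$ and $R_\xi>r_1$). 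At $r_1$, $u(r_1)=0$ and $v(r_1)$ satisfies $1-1/\sqrt{1+(u'(r_1))^2}=G(\xi)$, so $u'(r_1)=-\sqrt{(1-G(\xi))^{-2}-1}=:-\beta<0$ is a definite negative number. Then by the oddness of $g$, the function $r\mapsto -u_\xi(r_1 - r)$ would... actually the cleaner route: continue the solution past $r_1$; now $u<0$, $g(u)<0$ on $(-\a,0)$, so $v'=-g(u)>0$, $v$ increases, but we need $u$ to reach a minimum and come back. By the same energy/monotonicity argument $u$ decreases (since $u'<0$ still, as $v$ is increasing through a negative value toward $0$... one tracks: $v(r_1)<0$, $v'>0$, so $v$ increases to $0$ at some $r_2$ where $u'=0$, i.e. $u$ has a minimum $-\mu$ with $G(-\mu)=G(\xi)$, i.e. $\mu=\xi$ by oddness of $G$) — so $u$ bottoms out at exactly $-\xi$. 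Thus on $[0,r_2]$, $u$ goes $\xi \to 0 \to -\xi$, and by uniqueness plus the reflection symmetry $(u,v,r)\mapsto(-u,-v, \cdot)$ combined with translation, the solution on $[r_2, 2r_2]$ mirrors that on $[0,r_2]$, and continuing we get a solution on all of $[0,+\infty)$ (hence $R_\xi=+\infty$) that is periodic with period $2r_2$ and oscillates, with maximum modulus $|\xi|$ attained at $0$. The delicate points to nail down carefully are: (a) that each "quarter-period" is traversed in finite $r$-time, which needs the uniform bound $|u'|\le\sqrt{(1-G(\xi))^{-2}-1}$ from $G(\xi)<1$ — this is exactly why the hypothesis $G(\xi)<1$ is the dividing line; (b) that the solution genuinely extends past each turning point and each zero (no blow-up), again from the uniform gradient bound; (c) using oddness of $g$ to identify the symmetry that glues the pieces into a periodic function and to pin the amplitude at $\pm\xi$; and (d) the case $\a=+\infty$, where $G$ is increasing on all of $(0,+\infty)$ and the argument is unchanged, versus $\a<+\infty$, where one must check $u$ never leaves $(-\a,\a)$, which follows since $G(u)\le G(\xi)<G(\a)$ (with $G(\a)$ possibly $+\infty$ or finite but $>G(\xi)$) and $G$ is increasing up to $\a$.
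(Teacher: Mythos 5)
Your proposal is correct and follows essentially the same route as the paper: the substitution $v=u'/\sqrt{1+(u')^2}$ and the conserved quantity $1-\frac{1}{\sqrt{1+(u')^2}}+G(u)=G(\xi)$ are exactly the paper's integrated equation and the identity $H_+(u'(r))=G(\xi)-G(u(r))$, and the case analysis (monotonicity from the sign of $g$, convexity/concavity, the dichotomy at $G(\xi)=1$ because $H_+<1$, and reflection symmetry about zeros and critical points to get periodicity and the amplitude $|\xi|$) mirrors the paper's proof. The minor points you leave informal (uniqueness with $g$ merely continuous, the blow-up details in (ii)--(iii)) are treated at a comparable level of detail in the paper, which cites standard references for them.
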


\begin{proof}
By standard arguments,  (see for example \cite[Section 1.2]{C}), there exists a local solution $u_\xi$ of the (\ref{eqr1+}). Now let $R_\xi>0$ be such that $[0,R_\xi)$ is the maximal interval where the function $u_\xi$ is defined. We have $u_\xi\in C^2([0,R_\xi))$. In the following we simply write $u,R$ instead of $u_\xi,R_\xi$, respectively, for brevity. Moreover, being $g$ an odd function, by \eqref{g2}, we can reduce ourselves to consider only  the case $\xi\ge 0$. 
\medskip
\\
By the assumptions on $g$, {\it (i)} follows immediately. 
\medskip
\\
Let us prove {\it (ii)}, supposing that $\a \in \R$. We first prove that $u$ is strictly increasing in $[0,R)$. Since $u$ satisfies in $[0,R)$
\begin{equation}\label{1u''+}
-\frac{u'' }{(1+(u')^2)^\frac32}=g(u),
\end{equation}
we have that $u''(0)=-g(\xi)>0$. Therefore, defining $\bar r=\sup\{r\in [0,R):u'(r)>0\}$, we have that $\bar r\in (0,R]$. We have that $\bar r=R$, indeed, otherwise, if $\bar r<R$, by  \eqref{eqr1+} and \eqref{g4}, we would have
\[
\frac{u'(\bar r) }{\sqrt{1+(u'(\bar r))^2}} =-\int_0^{\bar r} g(u(s))\ ds>0,
\]
reaching a contradiction. Being $u$ strictly increasing in $[0,R)$, there exists $L=\lim_{r\to R^-}u(r)$. If $R\in \R$, then, by the maximality of $R$, we conclude that either $L=+\infty$ or $\lim_{r\to R^-}u'(r)=+\infty$. Let us consider the case $R=+\infty$. By \eqref{1u''+} and again by \eqref{g4}, we infer that $u$ is strictly convex on $[0,R)$ and we can conclude that $L=+\infty$.
\medskip
\\
Let us prove {\it (iii)} and {\it (iv)}. We start observing that multiplying equation of (\ref{eqr1+}) by $u'$ and integrating over $[0,r]$, we obtain the following equality for any $r\in (0,R)$
\begin{equation}\label{H1+}
H_+(u'(r)) =G(\xi)-G(u(r)),
\end{equation}
where $H_+(t)=\frac{\sqrt{1+t^2}-1}{\sqrt{1+t^2}}$. 
Since \eqref{H1+} is even with respect to $u$, by \eqref{g2}, and with respect to $u'$, it is standard to prove that $u$ is symmetric about critical points and antisymmetric about zeros. Therefore, it suffices to show that $u$ decreases until it attains a zero  in order to prove that $u$ is periodic and $\|u\|_{L^{\infty}}=\xi$.  
\\
By \eqref{eqr1+} and \eqref{g4}, for all $r>0$ such that $0<u<\a $ on $[0,r]$ we have 
\[
\frac{u'( r) }{\sqrt{1+(u'(r))^2}} =-\int_0^{ r} g(u(s))\ ds<0,
\]
and so $u$ is strictly  decreasing as long as it remains positive. 
Suppose that $u(r)>0$, for all $r\in [0,R)$, then by \eqref{1u''+} and since $g(u)>0$, we have that $u''(r)<0$ for all $r\in [0,R)$.
Being $u$ strictly positive, decreasing and concave, the possibilities are two: either $|u'|$ blows up at $R^-$ or $|u'|$ is uniformly bounded in $[0,R)$. In the former case, which happens, as observed in \cite[Section 1.2]{C}, whenever $G(\xi)\ge 1$, we deduce that $R\in\R$.  In the latter case, which occurs, at contrary, if and only if $G(\xi)<1$, we have that $R=+\infty$ reaching a contradiction: $u$ vanishes at some $r>0$ and, therefore, $u$ is periodic and oscillating. In both cases, we see that the $L^{\infty}$-norm of $u$ is~$\xi$, as desired.
\end{proof}

We pass now to consider the case $N\ge 2$.

Since we look for radial solutions, we can reduce equation \eqref{eq+} to the following Cauchy problem
\begin{equation}\label{eqr+}
\begin{cases}
\dis -\left(\frac{u' }{\sqrt{1+(u')^2}}\right)'-\frac{N-1}r\frac{u' }{\sqrt{1+(u')^2}}=g(u), &\hbox{in }(0,+\infty),
\\[5mm]
u(0)=\xi ,
\quad
u'(0)=0,
\end{cases}
\end{equation}
with $\xi\in \R$.

 We have the following

\begin{theorem}\label{n>1+}
Assume \eqref{g1}-\eqref{g3}. For any $\xi \in \R$ there exists a solution $u_\xi\in C^2([0,R_\xi))$ of the Cauchy problem (\ref{eqr+}), where $R_\xi>0$ is such that $[0,R_\xi)$ is the maximal interval where the function $u_\xi$ is defined. Moreover, we have
\begin{itemize}
\item[(i)] if $|\xi|=\a \in \R$ or $\xi=0$, then $u_\xi\equiv \xi$;
\item[(ii)] if $\a \in \R$ and  $|\xi| > \a $, then $|u_\xi|$ strictly increases on $[0, R_\xi )$ and either $|u_\xi(r)|$ or $|u_\xi'(r)|$ diverges to $+\infty$, as $r\to  R_\xi^-$;
\item[(iii)] 
if $0 < |\xi| < \a$ and $G(\xi)\le1$, then $R_\xi=+\infty$ and $u_\xi$ is oscillating and localized with $\|u_\xi\|_{L^\infty(\R_+)}=|\xi|$.
\end{itemize}
\end{theorem}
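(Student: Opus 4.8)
The plan is to follow the architecture of the proof of Theorem~\ref{n=1+}, replacing the exact first integral of the one--dimensional problem by an energy that is only monotone, and bringing in the extra hypothesis $g'(0)>0$ exactly where the damping term $\frac{N-1}{r}\,\frac{u'}{\sqrt{1+(u')^2}}$ prevents energy conservation. Writing $\phi(t)=t/\sqrt{1+t^2}$, an increasing bijection of $\R$ onto $(-1,1)$, the Cauchy problem \eqref{eqr+} is equivalent to $r^{N-1}\phi(u'(r))=-\int_0^r s^{N-1}g(u(s))\,ds$; inverting $\phi$ and a standard fixed--point argument give a local $C^2$ solution $u=u_\xi$ on a maximal interval $[0,R)$, and by \eqref{g2} we may assume $\xi\ge0$, with $u\equiv\xi$ when $g(\xi)=0$ (i.e. $\xi=0$ or $\xi=\a$), which is (i). For (ii) with $\xi>\a$, the identity forces $\phi(u'(r))>0$, hence $u'(r)>0$, for small $r>0$; putting $\bar r=\sup\{r:u'>0 \text{ on }(0,r)\}$ and using that $u$ increasing makes $u>\a$, hence $g(u)<0$, on $[0,\bar r]$, one rules out $\bar r<R$ exactly as in the one--dimensional case, so $u$ is strictly increasing with limit $L\in(\xi,+\infty]$; if $R<\infty$, maximality together with $|\phi(u')|<1$ forces $|u|$ or $|u'|$ to diverge at $R^-$, while if $R=+\infty$ and $L<\infty$ then $g(u(r))\to g(L)<0$ makes $\phi(u'(r))\to+\infty$, impossible, so $L=+\infty$.

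For (iii) the main tool is the energy $\E(r):=H_+(u'(r))+G(u(r))$ with $H_+(t)=\frac{\sqrt{1+t^2}-1}{\sqrt{1+t^2}}$. Multiplying the equation by $u'$ gives $\E'(r)=-\frac{N-1}{r}\,\frac{(u'(r))^2}{\sqrt{1+(u'(r))^2}}\le0$, so $\E$ is nonincreasing and $\E(r)\le\E(0)=G(\xi)\le1$ on $[0,R)$. Since $g$ is odd, $G$ is even, nonnegative and strictly increasing on $[0,\a)$, so $H_+\ge0$ yields $G(u(r))\le G(\xi)$; if $|u|$ first reached $\a$ at some $r^*<R$ this would force $|u|\le\xi<\a$ on $[0,r^*)$, contradicting continuity, so $|u(r)|\le\xi$ on $[0,R)$ and $\|u\|_{L^\infty}=\xi$. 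If $G(\xi)<1$ then $H_+(u')\le G(\xi)<1$ bounds $|u'|$ uniformly, so $(u,u')$ is bounded and $R=+\infty$; if $G(\xi)=1$ a finite $R$ would force $|u'(r)|\to+\infty$, hence $H_+(u')\to1$ and $G(u)\to0$, so $\E(R^-)=1=\E(0)$, forcing $\E\equiv1$, hence $u'\equiv0$, hence $u\equiv\xi$, contradicting $g(\xi)>0$; so again $R=+\infty$.

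It remains to show $u$ oscillates and is localized, and this is where the real work lies. The central claim is that $u$ cannot be eventually of one sign; since $v:=-u$ solves the same equation by \eqref{g2}, assume $u>0$ on some $[r^*,+\infty)$. At each critical point $r_0>r^*$ the equation gives $u''(r_0)=-g(u(r_0))<0$ because $0<u(r_0)\le\xi<\a$, so every such point is a strict local maximum; hence $u$ has at most one critical point there, is eventually monotone, decreases to some $\ell\ge0$ with $\ell=0$ (as $\ell>0$ would give $g(u)\to g(\ell)>0$ and then $\phi(u'(r))\to-\infty$), and consequently $u'\to0$. Now \eqref{g3} enters: with $\mu:=g'(0)>0$ one has $g(u(r))\ge\tfrac\mu2 u(r)$ and $|u'(r)|$ small for $r$ large. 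Setting $q:=-\phi(u')\in(0,1)$, from $(r^{N-1}q)'=r^{N-1}g(u)\ge\tfrac\mu2 r^{N-1}u$ and $-u'=q/\sqrt{1-q^2}\ge q$ one finds that $Q(r):=\int_{r_1}^r s^{N-1}u(s)\,ds$ satisfies $Q>0$, $Q'>0$ and $Q''-\tfrac{N-1}{r}Q'+\tfrac\mu2 Q\le0$ on some $(r_1,+\infty)$; the substitution $Q=r^{N-1}V$ turns this into $V''+\tfrac{N-1}{r}V'+\big(\tfrac\mu2-\tfrac{N-1}{r^2}\big)V\le0$, so $V>0$ is a positive supersolution, on an exterior half--line, of a Bessel--type equation with potential eventually bounded below by $\mu/4>0$. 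Such an equation is oscillatory (its solutions are, up to the factor $r^{-(N-1)/2}$, Bessel functions, with infinitely many zeros diverging to $+\infty$ and asymptotically constant gaps), so a Sturm comparison gives a contradiction. Hence $u$ changes sign in $(r,+\infty)$ for every $r>0$, producing an unbounded sequence of zeros, so $u$ is oscillating. For localization, if $z_1<z_2<\cdots$ are consecutive sign changes, then on $[z_k,z_{k+1}]$ the function $u$ has one signed extremum $m_k$ with $|u(m_k)|=\max_{[z_k,z_{k+1}]}|u|$ and $\E(m_k)=G(|u(m_k)|)$, so monotonicity of $\E$ gives $|u(m_k)|\to G^{-1}(\E_\infty)$ for some $\E_\infty\ge0$; but $\sum_k(\E(z_k)-\E(z_{k+1}))=(N-1)\int_{z_1}^{+\infty}\frac{(u')^2}{r\sqrt{1+(u')^2}}\,dr<+\infty$, whereas $\E_\infty>0$ would keep the amplitudes away from $0$ and — by another Sturm comparison with a Bessel function, whose consecutive zeros have bounded spacing — keep the gaps $z_{k+1}-z_k$ bounded, forcing that integral to diverge; so $\E_\infty=0$, $u(r)\to0$, and then $H_+(u')=\E-G(u)\to0$ gives $u'\to0$, and the equation gives $u''\to0$. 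This completes (iii), hence the theorem.

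The main obstacle, throughout this last step, is to extract the oscillation from $g'(0)>0$ by comparison with the oscillatory Bessel equation; the technical nuisance is that the mean--curvature nonlinearity $\phi(u')$ equals $u'$ only to leading order as $u'\to0$, which is why the comparison must be performed in the regime where $|u'|$ is already small, so that the discrepancy between $\phi(u')$ and $u'$ can be absorbed into the differential inequality.
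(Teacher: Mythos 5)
Your argument is essentially correct and reaches the same conclusion, but the two key steps of part (iii) are carried out by genuinely different means, and it is worth making the comparison explicit. To show $R_\xi=+\infty$, you handle the borderline case $G(\xi)=1$ with a clean observation the paper does not spell out: a finite-time blow-up of $u'$ would force the monotone energy $\E=H_+(u')+G(u)$ to stay identically equal to $1$, hence $u'\equiv0$, hence $u\equiv\xi$, which is incompatible with $g(\xi)>0$. For oscillation (Step 1), the paper sets $v=r^{(N-1)\bar c/2}u$, derives $v''+\sigma_2^+v\le 0$ with $\sigma_2^+\ge c_0>0$ eventually, and closes with a purely elementary argument (eventually $v''\le -C$, so $v'\to-\infty$, contradiction). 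You instead work with $Q=\int_{r_1}^r s^{N-1}u$, pass to $V=Q/r^{N-1}$, and invoke the Sturm/Riccati non-oscillation criterion against the oscillatory radial Bessel equation $V''+\tfrac{N-1}{r}V'+\tfrac\mu4V=0$. Both routes deliver the contradiction; the paper's is more self-contained, while yours more transparently isolates where the hypothesis $g'(0)>0$ is used (and where $\phi(u')\approx u'$ is needed). For localization (Step 3), the paper uses an Ascoli--Arzel\`a argument to show that the translates $u(\cdot+r_{4j})$ converge to the one-dimensional periodic orbit with the given amplitude, obtains a uniform lower bound for $|u'|$ on definite subintervals, and then runs a harmonic-series estimate on the energy drop. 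You instead telescope $\E(z_k)-\E(z_{k+1})$ and argue that, if $\E_\infty>0$, a Sturm comparison with the same Bessel-type equation bounds the gaps $z_{k+1}-z_k$, whence the energy drop diverges. This is a reasonable alternative, but it is the one place where the write-up is thinner than the rest: the equation for $u$ is only in Sturm--Liouville form $(P(r)u')'+Q(r)u=0$ with $P,Q$ determined by $u$ itself, so you must make explicit the uniform bounds $c_1r^{N-1}\le P\le c_2r^{N-1}$ (coming from $|u'|\le M$) and $Q\ge\lambda_0r^{N-1}$ (coming from $\inf_{(0,\xi]}g(t)/t>0$, which uses continuity of $g$ and $g'(0)>0$ together), and then apply Sturm--Picone with a constant-coefficient radial Helmholtz equation; this is sound but deserves the same care you gave Step~1, and it is arguably more delicate than the paper's route via the 1D periodic limit. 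Two small points: ``consequently $u'\to0$'' needs a one-line justification from the convergence of $\E$ and the boundedness of $u$, and one must also rule out that $u$ is eventually increasing on $[r^*,\infty)$ before asserting ``decreases to some $\ell\ge0$''; both follow easily from the first integral $r^{N-1}\phi(u')=-\int_0^r s^{N-1}g(u)$, but are currently implicit.
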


\begin{proof}
Also in this case, by \cite{NS2}, there exists a local solution $u_\xi$ of the Cauchy problem (\ref{eqr+}). Now let $R_\xi>0$ be such that $[0,R_\xi)$ is the maximal interval where the function $u_\xi$ is defined. We have $u_\xi\in C^2([0,R_\xi))$. In the following we simply write $u,R$ instead of $u_\xi,R_\xi$, for brevity. Moreover, being $g$ an odd function, by \eqref{g2}, we can reduce ourselves to consider only  the case $\xi\ge 0$. 
\medskip
\\
By the assumptions on $g$, {\it (i)} follows immediately. 
\medskip
\\
Let us prove {\it (ii)}, in the case of $\a \in \R$. We first prove that $u$ is strictly increasing in $[0,R)$. 
\\
Since
\[
\lim_{r\to 0^+}\frac{u'(r)}{r}=\lim_{r\to 0^+}\frac{u'(r)-u'(0)}{r}=u''(0),
\]
observing that $u$ satisfies in $(0,R)$
\begin{equation}\label{u''+}
-\frac{u'' }{(1+(u')^2)^\frac32}-\frac{N-1}r\frac{u' }{\sqrt{1+(u')^2}}=g(u),
\end{equation}
we have that $Nu''(0)=-g(\xi)>0$. Therefore, defining $\bar r=\sup\{r\in [0,R):u'(r)>0\}$, we deduce that $\bar r\in (0,R]$. We have that $\bar r=R$, indeed, otherwise, if $\bar r<R$, multiplying the equation in \eqref{eqr+} by $r^{N-1}$, integrating over $(0,\bar r)$, and by \eqref{g4}, we would have
\[
\frac{\bar r^{N-1} u'(\bar r) }{\sqrt{1+(u'(\bar r))^2}} =-\int_0^{\bar r}s^{N-1} g(u(s))\ ds>0,
\]
reaching a contradiction. Being $u$ strictly increasing in $[0,R)$, there exists $L=\lim_{r\to R^-}u(r)$. If $R\in \R$, then, by the maximality of $R$, we conclude that either $L=+\infty$ or $\lim_{r\to R^-}u'(r)=+\infty$ and we reach the conclusion. Let us consider, therefore, the case $R=+\infty$. Then, since $0 \le u'(r)/\sqrt{1+(u'(r))^2}\le 1$,  for all $r\ge 0$, we have
\[
\lim_{r\to +\infty}\frac{N-1}r\frac{u'(r) }{\sqrt{1+(u'(r))^2}}=0.
\]
Hence, by \eqref{u''+} and by \eqref{g4}, we infer that $u$ is strictly asymptotically convex  and so $L=+\infty$, as claimed.
\medskip
\\
Let us prove {\it (iii)}. Since the proof is quite long, we divide it into intermediate steps.
\\
{\it Step 1: $u$ decreases to a first zero}.
\\
For all $r>0$ such that $0<u<\a $ on $[0,r]$, multiplying the equation in  \eqref{eqr+} by $r^{N-1}$ and integrating over $(0,r)$, by \eqref{g4} we have 
\[
\frac{r^{N-1}u'( r) }{\sqrt{1+(u'(r))^2}} =-\int_0^{ r} s^{N-1}g(u(s))\ ds<0,
\]
and so $u$ is strictly decreasing as long as it remains positive. 
\\
Moreover, multiplying equation of (\ref{eqr+}) by $u'$ and integrating over $(0,r)$, we obtain the following equality, for any $r\in (0,R)$,
\begin{equation}\label{H+}
H_+(u'(r))+(N-1)\int_0^r \frac{(u'(s))^2}{s\sqrt{1+(u'(s))^2}}ds=G(\xi)-G(u(r)),
\end{equation}
where $H_+(t)=\frac{\sqrt{1+t^2}-1}{\sqrt{1+t^2}}$. Since $G(\xi)\le 1$, we deduce that $u'$ is bounded as long $u$ remains non-negative.
\\
Suppose by contradiction that $u(r)>0$, for all $r\in[0,R)$, then $R=+\infty$ and let $u_\infty=\lim_{r\to +\infty}u(r)\ge0$. 
\\
Let us consider, now, the function $Z_+:[0,+\infty)\to \R$, defined by 
\begin{equation}\label{z+}
Z_+(r) := 1-\frac1{\sqrt{1+(u'(r))^2}}+ G(u(r))
\end{equation}
and observe that $Z_+$ strictly decreases since, for all $r>0$, 
\begin{equation}\label{z'+}
Z_+'(r) =  \left[\left(\frac{u' }{\sqrt{1+(u')^2}}\right)'+g(u)\right]u'=-\frac{N-1}r\frac{(u')^2 }{\sqrt{1+(u')^2}}< 0.
\end{equation}
Therefore there exists $Z_\infty^+=\lim_{r\to +\infty}Z_+(r)$. 
\\
We show that, in this case, $\lim_{r\to +\infty}u'(r)=0$.
Indeed, otherwise, since $u$ admits a  horizontal asymptote, there would exist $c>0$ and two positive increasing diverging sequences $\{r_n\}_n$ and $\{s_n\}_n$, such that $s_n\in (r_n,r_{n+1})$, $u'(r_n)\le -c$, for $n$ sufficiently large, and $u'(s_n)\to 0$, as $n\to +\infty$.
Therefore we would have $Z_+(r_n)\ge 1-(1+c^2)^{-1/2}+G(u_\infty)$, for $n$ sufficiently large, while $Z_+(s_n)\to G(u_\infty)$, as $n\to +\infty$, contradicting the existence of the limit of $Z_+$ at infinity.
Hence, there exists $\bar c>1$ such that $1\le 1+( u'(r))^2\le \bar c$, for all $r\ge 0$. Therefore, by \eqref{eqr+}, we have, in $(0,+\infty)$,
\begin{equation*}
u''=-\frac{N -1}r u'[ 1+ (u')^2] -g(u) [1+( u')^2]^\frac32 \le  -\frac{(N -1)\bar c}{r}u' -g(u),
\end{equation*} 
where we have used the fact that $u' < 0$ and $g(u) > 0$, in $(0,+\infty)$. 
Therefore, if we set $v = r^\frac{(N-1)\bar c} 2 u$,  we get the following
\begin{equation}\label{sigma+}
v''+\s_2^+(r)v\le 0, \qquad\hbox{ where }\
\s_2^+(r):= \frac{g(u(r))}{u(r)}-\frac{(N -1)\bar c [(N -1)\bar c-2]}{4r^2}.
\end{equation}
By \eqref{g3}, we infer that there exists $c_0>0$ such that $\s_2^+(r)\ge c_0$, definitively, and so $v''$ is definitively negative. 
Being $v'$ definitively decreasing, there exists $\bar L=\lim_{r\to +\infty}v'(r)<+\infty$. Observe that $\bar L\ge 0$, because, otherwise, $\lim_{r\to +\infty}v(r)=-\infty$ contradicting the positivity of $u$. Hence $v$ is definitively increasing 
and then there exists $\bar r> 0$ such that, for any $r > \bar r$, we have $v(r) > v(\bar r)>0$. From \eqref{sigma+} we infer that, for some positive constant $C$, $v''(r) \le  -C < 0$ definitively. Integrating this last inequality over $(r_0,r)$, with a sufficiently large $r_0$, we deduce that $\bar L =  -\infty$ and we reach again a contradiction. 
%
%
Hence, $u$ attains a zero.
\\
{\it Step 2: $u$ is oscillating and  $\|u\|_{L^\infty(\R_+)}=\xi$}.
\\
Let us first show that there exist
$0 = r_0 < r_1 < r_2 < r_3 <\cdots$ such that all $r_{4j}$ are local maximizers, all $r_{4j+2}$
are local minimizers and all $r_{2j+1}$ are zeros of $u$. 
\\
The existence of a first zero $r_1 > 0 = r_0$ of $u$ has been shown in Step 1
and the strict monotonicity of $Z_+$ in $[r_0,r_1]$ implies $Z_+(r_1 ) < Z_+(r_0 )$. Concerning the
behaviour of $u$ on $[r_1 ,+\infty)$, there are now three possibilities:
\begin{itemize}
\item[(a)] $u$ decreases until it attains $-\xi$  at some $\bar r>r_1$;
\item[(b)] $u$ decreases on $[r_1 , +\infty)$ to some value $u_\infty \in  [-\xi, 0)$;
\item [(c)] $u$ decreases until it attains a critical point at some $r_2 > r_1$ with $-\xi < u(r_2 ) < 0$.
\end{itemize}
First of all, let us observe that, arguing as before, by \eqref{H+}, $u'$ remains bounded in all these three possibilities.
\\
Let us show that the cases (a) and (b) do not occur. Indeed, if there existed $\bar r > 0$ such that $u(\bar r) = -\xi$, then we would deduce that
\[
Z_+(\bar r) \ge G(u(\bar r)) = G(\xi)=G(u(0)) = Z_+(0)
\]
which is in contradiction with the strictly decreasing monotonicity of $Z_+$ in any interval whose extreme points are consecutive stationary points of $u$.  Hence, the case (a) is impossible. Let us now suppose that (b) holds. Arguing as in Step 1, we can prove that $\lim_{r\to +\infty}u'(r)=0$ and so, by \eqref{u''+}, we have $\lim_{r\to +\infty}u''(r)=-g(u_\infty)$. This implies that $g(u_\infty)$ must be equal to $0$. Being  $u_\infty \in  [-\xi, 0)$, we deduce that $u_\infty=-\xi$. Hence
\[
Z_+(r) \ge G(u(r))  \xrightarrow[r\to +\infty]{}   G(u_\infty)=G(\xi)=G(u(0)) = Z_+(0),
\]
reaching again a contradiction.
Therefore, we can say that $u$ decreases until it attains a critical point at some $r_2 > r_1$ with $-\xi < u(r_2 ) < 0$. 
Moreover, by \eqref{u''+}, \eqref{g2} and \eqref{g4}, we have
\[
u''(r_2)=-g(u(r_2))>0.
\]
Hence, $r_2$ is a local minimizer. By \eqref{g2} and \eqref{g4}, we can now
repeat the argument to get a zero $r_3 > r_2$, a local maximizer $r_4 > r_3$, a zero
$r_5 > r_4$ and so on, such that $\xi=u(r_0)>-u(r_2)>u(r_4)>\cdots$. Notice that this reasoning also shows that there are no further zeros or critical points. Moreover we conclude, also, that $\|u\|_{L^\infty(\R_+)}=\xi$.
\\
{\it Step 3: $u$ is localized}.
\\
First we show $u(r)\to 0$, as $r \to +\infty$. Take the sequence of maximizers $\{r_{4j}\}_j$ and assume, by contradiction, that $u(r_{4j})\to z \in (0,\xi)$, as $j\to +\infty$. Then \eqref{eqr+} and Ascoli-Arzel\`a Theorem imply that $\{u(\cdot + r_{4j})\}_j$ converges locally with respect to the $C^1$-norm to the unique solution $w$ of the Cauchy problem given by the equation in \eqref{eqr1+} and $w(0) = z,w'(0) = 0$, as $j\to +\infty$. Since $0\le G(z)<G(\xi)\le 1$, Theorem \ref{n=1+}-{\it (iv)} implies that $w$ is $T$-periodic with two zeroes in $[0,T]$ and let us call $\tau$ the first one. There exists $\d > 0$ such that $|w'| \ge  2\d$ on $[\tau-2\d,\tau+2\d]$.
Hence, for sufficiently large $j_0$ and for all $j\ge j_0$, we have 
\begin{equation}\label{210}
|u'(r_{4j}+r)|\ge \d , \quad \hbox{ for }r\in [\tau-\d,\tau+\d], \quad\hbox{ and }r_{4(j+1)}-r_{4j}\ge \tau-\d .
\end{equation}
So, for all $j\ge j_0$ , 
\[
\frac{(u'(r))^2}{\sqrt{1+(u'(r))^2}}\ge \d ' , \quad \hbox{ for }r\in [r_{4j}+\tau-\d,r_{4j}+\tau+\d], \quad\hbox{ where }\d'=\frac{\d ^2}{\sqrt{1+\d ^2}} .
\]
Finally, by \eqref{z'+}, for $k\ge j_0$ and $r\ge r_{4j}+\tau+\d$, we have 
\begin{align*}
Z_+(r) &=Z_+(0) -(N-1)\int_0^r\frac{(u'(s))^2 }{s\sqrt{1+(u'(s))^2}}\ ds
\\
&\le Z_+(0) -(N-1)\sum_{j=j_0}^{k}\int_{r_{4j}+\tau-\d}^{r_{4j}+\tau+\d}\frac{(u'(s))^2 }{s\sqrt{1+(u'(s))^2}}\ ds
\\
&\le Z_+(0) -\d '(N-1)\sum_{j=j_0}^{k}\int_{r_{4j}+\tau-\d}^{r_{4j}+\tau+\d}\frac{1 }{s}\ ds
\\
&= Z_+(0) -\d '(N-1) \sum_{j=j_0}^{k}\log \frac{r_{4j}+\tau+\d}{r_{4j}+\tau-\d}. 
\end{align*}
Now the arguments proceed as in \cite{GZ,MMP}, but we give some details for the sake of completeness.
\\
Let us fix $c(\d ')$ such that $\log(1+x)\ge c(\d ')x$ for all $0\le x\le 2\d /(r_{4j}+\tau-\d)$. Then by \eqref{210}, we deduce that
\begin{align*}
Z_+(r) &\le Z_+(0) -\d 'c(\d ')(N-1) \sum_{j=j_0}^{k} \frac{2\d}{r_{4j}+\tau-\d}
\\
&\le Z_+(0) -\d 'c(\d ')(N-1) \sum_{j=j_0}^{k} \frac{2\d}{r_{4j_0}+(j+1-j_0)\tau-\d}. 
\end{align*}
Since the harmonic series diverges, choosing $k$ and $r$ sufficiently large, we obtain that $Z_+(r)\to -\infty$ reaching a contradiction with the non-negativity of $Z_+$. As a consequence $u(r_{4j})\to 0$ and analogously we deduce that also $u(r_{4j+2})\to 0$. This implies that $u(r)\to 0$,  as $\to +\infty$.
\\
Being $\|u\|_{L^\infty(\R_+)}=\xi$, with $\xi\in (0,\a)$, by \eqref{g4}, $Z_+$ is non-negative; moreover, 
since $Z_+$ is decreasing, it follows that it admits a finite and non-negative limit at infinity. Hence, by \eqref{z+}, also $|u'|$ has a limit at infinity which must be zero because $u$ converges to $0$. Finally, from \eqref{u''+} we deduce that also $u''$ converges to zero at infinity.
\end{proof}

\begin{remark}
It is notable to observe that, if $0 < |\xi| < \a$ and $G(\xi)=1$, we obtain a different behavior for $N=1$ and for $N\ge 2$. Moreover, it would be interesting to understand what happens in the case if $0 < |\xi| < \a$ and $G(\xi)>1$ for the multi-dimensional case.
\end{remark}

\section{The Lorentz-Minkowski case}\label{se:LS}

This section will be devoted to the Lorentz-Minkowski case.

Let us start with the one-dimensional case, where we can simply consider the following Cauchy problem 
\begin{equation}\label{eqr1-}
\begin{cases}
\dis -\left(\frac{u' }{\sqrt{1-(u')^2}}\right)' =g(u), &\hbox{in }(0,+\infty),
\\[5mm]
u(0)=\xi ,
\quad
u'(0)=0,
\end{cases}
\end{equation}
where $\xi\in \R$.

As in the Euclidean case, the following result is partially already known, see for example \cite{C} and the references therein, but we present it for sake of completeness and because it is a crucial step for the study of the multi-dimensional case.

\begin{theorem}\label{n=1-}
Assume \eqref{g1}-\eqref{g4}. For any $\xi \in \R$ there exists a solution $u_\xi\in C^2([0,R_\xi))$ of the Cauchy problem (\ref{eqr1-}), where $R_\xi\in(0,+\infty]$ is such that $[0,R_\xi)$ is the maximal interval where the function $u_\xi$ is defined. Moreover, we have
\begin{itemize}
\item[(i)] if $|\xi|=\a \in \R$ or $\xi=0$, then $u_\xi\equiv \xi$;
\item[(ii)] if $\a \in \R$ and  $|\xi| > \a $, then $|u_\xi|$ strictly increases to $+\infty$ on $[0, R_\xi )$;
\item[(iii)] if $0 < |\xi| < \a$, then $R_\xi=+\infty$ and $u_\xi$ is oscillating and periodic with $\|u_\xi\|_{L^\infty(\R_+)}=|\xi|$.
\end{itemize}
\end{theorem}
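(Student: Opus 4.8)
The plan is to mimic closely the proof of Theorem~\ref{n=1+}, making the modifications forced by replacing $H_+$ with the Lorentz-Minkowski energy, which is the source of the simplification (no extra hypothesis $G(\xi)<1$ is needed). First I would invoke standard ODE theory (as in \cite[Section~1.2]{C}) to get a local $C^2$ solution $u=u_\xi$ on a maximal interval $[0,R)$, recalling that in the Lorentz-Minkowski setting the constraint $|u'|<1$ is automatic on the interval of existence. As before, by \eqref{g2} we may assume $\xi\ge 0$, and claim~{\it (i)} is immediate from \eqref{g1} and \eqref{g4}. Rewriting \eqref{eqr1-} in quasilinear form gives, on $(0,R)$,
\begin{equation}\label{pf:u''-}
-\frac{u''}{(1-(u')^2)^{3/2}}=g(u),
\end{equation}
so $u''(0)=-g(\xi)$, with sign opposite to $g(\xi)$.

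For~{\it (ii)}, assume $\a\in\R$ and $\xi>\a$. Then $g(\xi)<0$, so $u''(0)>0$; setting $\bar r=\sup\{r\in[0,R):u'(r)>0\}$, the same integral identity as in Theorem~\ref{n=1+}, namely $u'(\bar r)/\sqrt{1-(u'(\bar r))^2}=-\int_0^{\bar r}g(u(s))\,ds$, forces $\bar r=R$, so $u$ is strictly increasing; then \eqref{pf:u''-} with $g(u)<0$ shows $u$ is strictly convex on $[0,R)$, hence $L:=\lim_{r\to R^-}u(r)$ exists, and convexity together with $|u'|<1$ forces $L=+\infty$ — note that here, unlike the Euclidean case, $u'$ cannot blow up because it is trapped in $(-1,1)$, which is exactly why the dichotomy of Theorem~\ref{n=1+}{\it (ii)} collapses to the single alternative $|u_\xi|\to+\infty$. (One should also check $R=+\infty$ is consistent, or rather that whether $R$ is finite or not, $|u|\to+\infty$; in fact convexity plus a uniformly bounded derivative on a finite interval would keep $u$ bounded, contradicting maximality, so $R=+\infty$.)

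For~{\it (iii)}, take $0<\xi<\a$. Multiplying \eqref{eqr1-} by $u'$ and integrating yields the conservation law
\begin{equation}\label{pf:H1-}
H_-(u'(r))=G(\xi)-G(u(r)),\qquad H_-(t):=\frac{1-\sqrt{1-t^2}}{\sqrt{1-t^2}},
\end{equation}
so $H_-(u'(r))=1/\sqrt{1-(u'(r))^2}-1$. This identity is even in $u$ (via \eqref{g2}) and even in $u'$, so as in Theorem~\ref{n=1+} one gets that $u$ is symmetric about critical points and antisymmetric about zeros; hence it suffices to show $u$ strictly decreases to a first zero. From the integral form of \eqref{eqr1-}, as long as $0<u<\a$ we have $u'<0$, and from \eqref{pf:u''-} with $g(u)>0$ we have $u''<0$, so $u$ is decreasing and concave while positive. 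The key point — and the one place the argument genuinely differs from and is \emph{easier} than the Euclidean case — is that $|u'|<1$ is automatic, so the ``derivative blow-up'' branch of Theorem~\ref{n=1+}{\it (iii)} cannot happen: $u'$ is automatically bounded. Were $u>0$ on all of $[0,R)$, concavity with $u$ decreasing would force $R<+\infty$ only if $u'$ blew up (impossible) or $u\to-\infty$ (impossible since $u>0$); so in fact $R=+\infty$ and the decreasing concave positive function $u$ has a derivative bounded away from $0$ (by \eqref{pf:H1-}, since $G(u)$ stays near $G(u_\infty)<G(\xi)$ once $u$ is small, $H_-(u')$ is bounded below by a positive constant), forcing $u\to-\infty$, a contradiction. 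Hence $u$ attains a first zero, \eqref{pf:H1-} gives $\|u\|_{L^\infty}=\xi$ with $|u'|<1$ throughout, periodicity follows from the symmetry, and we are done.

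The only mild obstacle I anticipate is bookkeeping around whether $R$ is finite: one must rule out that $|u'|\to 1$ at a finite $R$, but \eqref{pf:H1-} controls this precisely — $1/\sqrt{1-(u')^2}-1=G(\xi)-G(u)$ stays uniformly bounded as long as $u$ ranges in a bounded set where $G$ is bounded, so $|u'|$ stays uniformly below $1$ and no finite-time singularity of that type occurs; thus maximality must be broken by $|u|\to\infty$, which only happens in case~{\it (ii)}. With that in hand all three alternatives are clean.
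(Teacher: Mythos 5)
Your proposal is correct and follows essentially the same route as the paper: the conservation law $H_-(u'(r))=G(\xi)-G(u(r))$ yielding the uniform bound $|u'|\le 1-\e$ while $u$ stays bounded, the integral identity plus convexity for {\it (ii)}, and symmetry about critical points/antisymmetry about zeros reducing {\it (iii)} to showing $u$ decreases to a first zero, excluded by positivity, monotonicity and concavity. The only cosmetic difference is that in the final contradiction you bound $|u'|$ away from zero via the conservation law, whereas the paper concludes directly from $u$ being positive, decreasing and concave; both are fine.
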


\begin{proof}
By standard arguments (see for example \cite[Section 3.2]{C}), there exists a local solution $u_\xi$ of the (\ref{eqr1-}). Now let $R_\xi>0$ be such that $[0,R_\xi)$ is the maximal interval where the function $u_\xi$ is defined. We have $u_\xi\in C^2([0,R_\xi))$. In the following we simply write $u,R$ instead of $u_\xi,R_\xi$, respectively, for brevity. Moreover, being $g$ an odd function, by \eqref{g2}, we can reduce ourselves to consider only  the case $\xi\ge 0$. 
\\
We start observing that multiplying equation of (\ref{eqr1-}) by $u'$ and integrating over $(0,r)$, we obtain the following equality for any $r\in (0,R)$
\begin{equation}\label{H1-}
H_-(u'(r)) =G(\xi)-G(u(r)),
\end{equation}
where $H_-(t)=\frac{1-\sqrt{1-t^2}}{\sqrt{1-t^2}}$.
By \eqref{H1-}, we infer that $G(\xi)-G(u(r))\ge 0$, for any $r\in [0,R)$, and that $H_-(u'(r))$ is bounded if the right hand side is bounded: in particular, we have that
\begin{equation}\label{ce1}
\forall c>0, \exists\e=\e(c) >0: |u'(r)|\le 1-\e , \hbox{ whenever } |u(r)|\le c . 
\end{equation}
\\
By the assumptions on $g$, {\it (i)} follows immediately. 
\medskip
\\
Let us prove {\it (ii)}, assuming that $\a\in \R$. We first prove that $u$ is strictly increasing in $[0,R)$. Since $u$ satisfies in $[0,R)$
\begin{equation}\label{1u''-}
-\frac{u'' }{(1-(u')^2)^\frac32}=g(u),
\end{equation}
we have that $u''(0)=-g(\xi)>0$ and so, defining $\bar r=\sup\{r\in [0,R):u'(r)>0\}$, we deduce that $\bar r\in (0,R]$. We have that $\bar r=R$, indeed, otherwise, if $\bar r<R$, by  \eqref{eqr1-} and \eqref{g4}, we would have
\[
\frac{u'(\bar r) }{\sqrt{1-(u'(\bar r))^2}} =-\int_0^{\bar r} g(u(s))\ ds>0,
\]
reaching a contradiction. Hence, by \eqref{1u''-} and again by \eqref{g4}, we infer that $u$ is strictly convex on $[0,R)$ and we can conclude as in the Euclidean case.
\medskip
\\
Let us prove {\it (iii)}. 
Since \eqref{H1-} is even with respect to $u$, by \eqref{g2}, and to $u'$, then $u$ is symmetric about critical points and antisymmetric about zeros and so it is periodic. Therefore, it suffices to show that $u$ decreases until it attains a zero.  
\\
By \eqref{eqr1-} and \eqref{g4}, for all $r>0$ such that $0<u<\a $ on $[0,r]$ we have 
\[
\frac{u'( r) }{\sqrt{1-(u'(r))^2}} =-\int_0^{ r} g(u(s))\ ds<0,
\]
and so $u$ is decreasing as long as it remains positive. Suppose by contradiction that $u(r)>0$, for all $r>0$, then, by \eqref{1u''-} and \eqref{g4}, we have that $u''(r)<0$ for all $r>0$.  Being $u$ strictly positive, decreasing and concave, we reach immediately a contradiction. 
Hence, $u$ attains a zero and the proof is finished. 
%
%
\end{proof}

Let us now consider the multi-dimensional case.

If we look for radial solutions, we can reduce equation \eqref{eq-} to the following Cauchy problem
\begin{equation}\label{eqr-}
\begin{cases}
\dis -\left(\frac{u' }{\sqrt{1-(u')^2}}\right)'-\frac{N-1}r\frac{u' }{\sqrt{1-(u')^2}}=g(u),  &\hbox{in }(0,+\infty),
\\[5mm]
u(0)=\xi ,
\quad
u'(0)=0.
\end{cases}
\end{equation}

We have the following

\begin{theorem}\label{n>1-}
Assume \eqref{g1}-\eqref{g3}. For any $\xi \in \R$ there exists a solution $u_\xi\in C^2([0,R_\xi))$ of the Cauchy problem (\ref{eqr-}), where $R_\xi>0$ is such that $[0,R_\xi)$ is the maximal interval where the function $u_\xi$ is defined. Moreover, we have
\begin{itemize}
\item[(i)] if $|\xi|=\a \in \R$ or $\xi=0$, then $u_\xi\equiv \xi$;
\item[(ii)] if $\a \in \R$ and  $|\xi| > \a $, then $|u_\xi|$ strictly increases to $+\infty$ on $[0, R_\xi )$;
\item[(iii)] if $0 < |\xi| < \a$, then $R_\xi=+\infty$ and $u_\xi$ is oscillating and localized with $\|u_\xi\|_{L^\infty(\R_+)}=|\xi|$.
\end{itemize}
\end{theorem}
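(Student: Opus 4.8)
The plan is to mimic the proof of Theorem~\ref{n>1+}, tracking the changes forced by the different operator. By the local existence theory for the radial Lorentz--Minkowski Cauchy problem (see \cite{BDD} and \cite[Section~3]{C}) one gets $u_\xi\in C^2([0,R_\xi))$, and by \eqref{g2} we may assume $\xi\ge0$; then (i) is immediate from \eqref{g4}. Multiplying the equation in \eqref{eqr-} by $u'$ and integrating on $(0,r)$ produces the analogue of \eqref{H+},
\begin{equation*}
H_-(u'(r))+(N-1)\int_0^r\frac{(u'(s))^2}{s\sqrt{1-(u'(s))^2}}\,ds=G(\xi)-G(u(r)),\qquad H_-(t)=\frac{1-\sqrt{1-t^2}}{\sqrt{1-t^2}}.
\end{equation*}
The decisive structural difference with the Euclidean case is that $H_-(t)\to+\infty$ as $|t|\to1^-$: hence, as in \eqref{ce1}, whenever $u(r)$ stays bounded with $G(u(r))\ge0$ the identity yields $|u'(r)|\le1-\e$ for some $\e=\e(\xi)>0$. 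Thus the gradient remains uniformly away from $\pm1$ \emph{for free}; this replaces, and makes unnecessary, the argument used in Theorem~\ref{n>1+} to bound $u'$, and it is also why here no assumption of the type $G(\xi)<1$ is needed. Moreover, setting $w=u'/\sqrt{1-(u')^2}$, which solves $w'=-\frac{N-1}{r}w-g(u)$ with $u$ bounded on bounded intervals, no finite-time blow-up can occur, so in fact $R_\xi=+\infty$ in every case.

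For (ii) (with $\a\in\R$ and $\xi>\a$) I would argue as in Theorem~\ref{n=1-}(ii): $Nu''(0)=-g(\xi)>0$ and the integrated identity $r^{N-1}u'(r)/\sqrt{1-(u'(r))^2}=-\int_0^r s^{N-1}g(u(s))\,ds>0$ (valid while $u>\a$) give that $u$ is strictly increasing on $[0,+\infty)$; were its limit $L$ finite, then $g(u(s))\le-m<0$ for $s$ large, the right-hand side grows like $r^N$, so $u'(r)/\sqrt{1-(u'(r))^2}\to+\infty$, hence $u'(r)\to1$ and $u(r)\to+\infty$, a contradiction; thus $L=+\infty$.

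The core is (iii), $0<\xi<\a$, which I would prove in the same three steps as Theorem~\ref{n>1+}. \emph{Step~1 (first zero).} While $u>0$ it stays below $\a$, the integrated identity shows $u$ strictly decreasing, the energy identity gives $|u'|\le1-\e$, and $Z_-(r):=H_-(u'(r))+G(u(r))$ satisfies $Z_-'(r)=-\frac{N-1}{r}(u'(r))^2/\sqrt{1-(u'(r))^2}<0$, so $Z_-$ is strictly decreasing and non-negative. Assuming by contradiction $u>0$ on $[0,+\infty)$ and putting $u_\infty=\lim u\ge0$, from $|u'|\le1-\e$ one has $(1-(u')^2)^{3/2}\ge\delta_0>0$, so (using $u'<0$, $g(u)>0$) the equation gives $u''\le-\frac{N-1}{r}u'-\delta_0\,g(u)$; the substitution $v=r^{(N-1)/2}u$ then yields $v''+\s_2^-(r)v\le0$ with $\s_2^-(r)=\delta_0\,g(u(r))/u(r)-\frac{(N-1)(N-3)}{4r^2}\ge c_0>0$ eventually, by \eqref{g3} (or since $g(u_\infty)/u_\infty>0$ when $u_\infty>0$); from here the convexity-plus-integration argument of Theorem~\ref{n>1+} forces $v'\to-\infty$, contradicting $v=r^{(N-1)/2}u>0$. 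Hence $u$ has a first zero $r_1$. \emph{Step~2 (oscillation and $\|u\|_{L^\infty(\R_+)}=\xi$).} As in Theorem~\ref{n>1+}, the strict monotonicity of $Z_-$ between consecutive stationary points rules out that $u$ attains $-\xi$ or converges to a limit in $[-\xi,0)$, so $u$ reaches a critical point $r_2$; since $u'(r_2)=0$ gives $u''(r_2)=-g(u(r_2))>0$, this $r_2$ is a strict local minimum with $-\xi<u(r_2)<0$, and iterating with \eqref{g2}--\eqref{g4} yields maximizers, minimizers and zeros $r_0<r_1<r_2<\cdots$ with $\xi=u(r_0)>-u(r_2)>u(r_4)>\cdots$ and no others, whence $\|u\|_{L^\infty(\R_+)}=\xi$. \emph{Step~3 (localization).} For the maximizer sequence $\{r_{4j}\}$, if $u(r_{4j})\to z\in(0,\xi)$, then by Ascoli--Arzel\`a and \eqref{eqr-} the shifts $u(\cdot+r_{4j})$ converge locally in $C^1$ to the solution $w$ of the Cauchy problem given by the equation in \eqref{eqr1-} with $w(0)=z$, $w'(0)=0$; by Theorem~\ref{n=1-}(iii) $w$ is periodic with a zero $\tau$ at which $|w'|\ge2\delta$, and then the chain of estimates of Theorem~\ref{n>1+}, Step~3, applied to $Z_-$ and using the divergence of the harmonic series, forces $Z_-(r)\to-\infty$, contradicting $Z_-\ge0$; hence $u(r_{4j})\to0$, and likewise $u(r_{4j+2})\to0$, so $u(r)\to0$. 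Finally $Z_-$ admits a finite non-negative limit, so $|u'|$ has a limit at infinity, necessarily $0$; and then, from the equation, $u''\to0$ as well.

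The step I expect to be the main obstacle is Step~1 — excluding that $u$ stays positive and decreases to a non-negative limit — since it is the only point where the energy/monotonicity bookkeeping does not suffice and one must exploit second-order information through the Sturm-type comparison. The delicate point is that the bound $|u'|\le1-\e$, hence $(1-(u')^2)^{3/2}\ge\delta_0$, must already be in force before the substitution $v=r^{(N-1)/2}u$; but the energy identity supplies it precisely as long as $0<u\le\xi<\a$.
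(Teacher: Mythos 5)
Your proposal is correct and follows essentially the same route as the paper's proof: the energy identity yielding the uniform bound $|u'|\le 1-\e$ away from the light cone, the Sturm-type comparison with $v=r^{\frac{N-1}2}u$ to produce the first zero, the monotone quantity $Z_-$ to generate the oscillation and the $L^\infty$ bound, and the Ascoli--Arzel\`a plus harmonic-series argument (via Theorem \ref{n=1-}) for localization. The only deviations are minor refinements rather than a different method, e.g.\ your explicit $w=u'/\sqrt{1-(u')^2}$ continuation argument for $R_\xi=+\infty$ and the slightly different way of excluding a finite limit in case (ii).
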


\begin{proof}
Also in this case, by \cite{NS2}, there exists a local solution $u_\xi$ of the Cauchy problem (\ref{eqr-}). Now let $R_\xi>0$ be such that $[0,R_\xi)$ is the maximal interval where the function $u_\xi$ is defined. We have $u_\xi\in C^2([0,R_\xi))$. In the following we simply write $u,R$ instead of $u_\xi,R_\xi$, for brevity. Moreover, being $g$ an odd function, by \eqref{g2}, we can reduce ourselves to consider only  the case $\xi\ge 0$.
%
%
%
%
\\
Multiplying equation of (\ref{eqr-}) by $u'$ and integrating over $(0,r)$, we obtain the following equality, for any $r\in (0,R)$,
\begin{equation}\label{H-}
H_-(u'(r))+(N-1)\int_0^r \frac{(u'(s))^2}{s\sqrt{1-(u'(s))^2}}ds=G(\xi)-G(u(r)),
\end{equation}
where $H_-(t)=\frac{1-\sqrt{1-t^2}}{\sqrt{1-t^2}}$.
By \eqref{H-} we infer that $G(\xi)-G(u(r))\ge 0$, for any $r\in (0,R)$, and that $H_-(u'(r))$ is bounded if the right hand side is bounded: in particular, we have that
\begin{equation}\label{ce}
\forall c>0, \exists\e=\e(c) >0: |u'(r)|\le 1-\e , \hbox{ whenever } |u(r)|\le c . 
\end{equation}
\\
By the assumptions on $g$, {\it (i)} follows immediately. 
\medskip
\\
Let us prove {\it (ii)}, in the case $\a \in \R$. We first prove that $u$ is strictly increasing in $[0,R)$. 
\\
Since
\[
\lim_{r\to 0^+}\frac{u'(r)}{r}=\lim_{r\to 0^+}\frac{u'(r)-u'(0)}{r}=u''(0),
\]
observing that $u$ satisfies
\begin{equation}\label{u''-}
-\frac{u'' }{(1-(u')^2)^\frac32}-\frac{N-1}r\frac{u' }{\sqrt{1-(u')^2}}=g(u),
\end{equation}
we have that $Nu''(0)=-g(\xi)>0$ and so, defining $\bar r=\sup\{r\in [0,R):u'(r)>0\}$, we deduce that  $\bar r\in (0,R]$. We have that $\bar r=R$, indeed, otherwise, if $\bar r<R$, multiplying the equation in \eqref{eqr-} by $r^{N-1}$, integrating over $(0,\bar r)$, by \eqref{g4} we would have
\[
\frac{\bar r^{N-1} u'(\bar r) }{\sqrt{1-(u'(\bar r))^2}} =-\int_0^{\bar r}s^{N-1} g(u(s))\ ds>0,
\]
reaching a contradiction. Being $u$ strictly increasing in $[0,R)$, there exists $L=\lim_{r\to R^-}u(r)$. If $R\in \R$, then, by the maximality of $R$, we conclude that $L=+\infty$. Let us consider the case $R=+\infty$ and assume by contradiction that $L\in \R$. Then, being $u$ bounded, by \eqref{ce} there exists $\d >0$ such that $\d \le \sqrt{1-(u'(r))^2}\le 1$,  for any $r\ge 0$, and so 
\[
\lim_{r\to +\infty}\frac{N-1}r\frac{u'(r) }{\sqrt{1-(u'(r))^2}}=0.
\]
Hence by \eqref{u''-} and by \eqref{g4}, we infer that $u$ is strictly convex definitively and so $L=+\infty$ reaching a contradiction.
\medskip
\\
Let us prove {\it (iii)}. As in the previous section, we divide the proof into intermediate steps.
\\
{\it Step 1: $u$ decreases to a first zero}.
\\
By \eqref{eqr-} and \eqref{g4}, for all $r>0$ such that $0<u<\a $ on $[0,r]$ we have 
\[
\frac{r^{N-1}u'( r) }{\sqrt{1-(u'(r))^2}} =-\int_0^{ r} s^{N-1}g(u(s))\ ds<0,
\]
and so $u$ is strictly decreasing as long as it remains positive. Suppose by contradiction that $u(r)>0$, for all $r>0$, then, being $u$ bounded, since by \eqref{ce} there exists $\d >0$ such that $\d \le \sqrt{1-(u'(r))^2}\le 1$, for any $r\ge 0$, by \eqref{u''-}, we have
\begin{equation*}
u''=-\frac{N -1}r u'[ 1- (u')^2] -g(u) [1-( u')^2]^\frac32 \le  -\frac{N -1}{r}u' -\d ^3g(u),
\end{equation*} 
where we have used the fact that $u'< 0$ and $g(u) > 0$. Therefore, if we set $v = r^\frac{N-1} 2 u$,  we get the following
\begin{equation*}\label{sigma-}
v''+\s_2^-(r)v\le 0 \qquad\hbox{ where }\
\s_2^-(r):= \frac{\d^3 g(u(r))}{u(r)}-\frac{(N -1)(N -3)}{4r^2}.
\end{equation*}
By \eqref{g3}, we infer that there exists $c_0>0$ such that $\s_2^-(r)\ge c_0$, definitively, and so $v''$ is definitively negative. Therefore, arguing as in the Euclidean case, we reach a contradiction.
\\
{\it Step 2: $u$ oscillates and  $\|u\|_{L^\infty(\R_+)}=\xi$}.
\\
Let us consider the function $Z_-:[0,+\infty)\to \R$, defined by
\begin{equation*}\label{z-}
Z_-(r) := \frac1{\sqrt{1-(u'(r))^2}}-1 + G(u(r))
\end{equation*}
and we observe that $Z_-$ decreases as
\begin{equation*}\label{z'-}
Z'_-(r) =  \left[\left(\frac{u' }{\sqrt{1-(u')^2}}\right)'+g(u)\right]u'=-\frac{N-1}r\frac{(u')^2 }{\sqrt{1-(u')^2}}\le 0. 
\end{equation*}
Moreover $Z_-'(r)=0$ if and only if $u'(r)=0$.
Arguing as in the Euclidean case, we show that there are
$0 = r_0 < r_1 < r_2 < r_3 <\cdots$ such that all $r_{4j}$ are local maximizers, all $r_{4j+2}$
are local minimizers and all $r_{2j+1}$ are zeros of $u$ and $\xi=u(r_0)>-u(r_2)>u(r_4)>\cdots$. Moreover there are no further zeros or critical points and $\|u\|_{L^\infty(\R_+)}=\xi$. 
\\
{\it Step 3: $u$ is localized}.
\\
First we show $u(r)\to 0$, as $r \to +\infty$. Take the sequence of maximizers $\{r_{4j}\}_j$ and assume by contradiction that $u(r_{4j})\to z \in (0,\xi)$, as $j\to +\infty$. Then \eqref{eqr-} and Ascoli-Arzel\`a Theorem imply that $\{u(\cdot + r_{4j})\}_j$ converges locally with respect to the $C^1$-norm to the unique solution $w$ of \eqref{eqr1-} with $w(0) = z,w'(0) = 0$, as $j\to +\infty$. Theorem \ref{n=1-}-{\it (iii)} implies that $w$ is $T$-periodic with two zeroes in $[0,T]$ and let us call $\tau$ the first one. There exists $\d \in (0,1)$ such that $|w'| \ge  2\d$ on $[\tau-2\d,\tau+2\d]$.
Hence, for sufficiently large $j_0$ and for all $j\ge j_0$, we have 
\[
|u'(r_{4j}+r)|\ge \d , \quad \hbox{ for }r\in [\tau-\d,\tau+\d], \quad\hbox{ and }r_{4(j+1)}-r_{4j}\ge \tau-\d .
\]
So, for all $j\ge j_0$, 
\[
\frac{(u'(r))^2}{\sqrt{1-(u'(r))^2}}\ge \d ' , \quad \hbox{ for }r\in [r_{4j}+\tau-\d,r_{4j}+\tau+\d], \quad\hbox{ where }\d'=\frac{\d ^2}{\sqrt{1-\d ^2}} .
\]
From now on, we can adapt easily the arguments of the Euclidean case to conclude.
\end{proof}

\subsection*{Acknowledgment}
The  author is partially supported by  a grant of the group GNAMPA of INdAM.  

The author wishes to express his more sincere gratitude to the anonymous referee: his/her acute comments and suggestions have been crucial to improve strongly the quality and the clarity of paper and to find and to fill a gap present in the previous version of this manuscript.

\end{document}